\tikzset{
	labl/.style={anchor=south, rotate=90, inner sep=.5mm}
}
\newcommand{\spa}{\medskip}
\tikzset{
	labl/.style={anchor=south, rotate=90, inner sep=.5mm}
}
\newcommand{\F}{\mathbb{F}}
\newcommand{\Q}{\mathbb{Q}}
\newcommand{\Z}{\mathbb{Z}}
\newcommand{\Ki}{\Q_p^{\mathrm{ur}}}
\newcommand{\Qpu}{{\mathbb{Q}_p^\mathrm{ur}}}
\newcommand{\Gm}{\mathbb{G}_m}
\newcommand{\calO}{\mathcal{O}}
\newcommand{\calP}{\mathcal{P}}
\newcommand{\calM}{\mathcal{M}}
\newcommand{\calN}{\mathcal{N}}
\newcommand{\calT}{\mathcal{T}}
\newcommand{\mf}{\mathbf}
\newcommand{\Fiisoc}{\mf F^{\infty}\textrm{-} \mf{Isoc}}
\newcommand{\Fnisoc}{\mf F^{n}\textrm{-} \mf{Isoc}}
\newcommand{\Isoc}{\mathbf{Isoc}}
\newcommand{\Isocl}{\mathbf{Isoc}_{\Qpu}}
\newcommand{\LS}{\mathbf{LS}}
\DeclareMathOperator{\Spec}{Spec}
\newcommand{\et}{\textrm{ét}}
\newcommand{\iso}{\xrightarrow{\sim}}
\begin{document}

\newtheorem{theo}[subsection]{Theorem}
\newtheorem*{theo*}{Theorem}
\newtheorem{ques}[subsection]{Question}
\newtheorem*{ques*}{Question}
\newtheorem{conj}[subsection]{Conjecture}
\newtheorem{prop}[subsection]{Proposition}
\newtheorem*{prop*}{Proposition}
\newtheorem{lemm}[subsection]{Lemma}
\newtheorem*{lemm*}{Lemma}
\newtheorem{coro}[subsection]{Corollary}
\newtheorem*{coro*}{Corollary}

\theoremstyle{definition}
\newtheorem{defi}[subsection]{Definition}
\newtheorem*{defi*}{Definition}
\newtheorem{exam}[subsection]{Example}
\newtheorem{rema}[subsection]{Remark}
\newtheorem*{rema*}{Remark}
\newtheorem*{exam*}{Example}
\newtheorem{nota}[subsection]{Notation}
\newtheorem{cons}[subsection]{Construction}

\numberwithin{equation}{subsection}

\title{Slopes of $F$-isocrystals over abelian varieties}
\author{Marco D'Addezio}
\date{\today}

\address{Institut de Mathématiques de Jussieu-Paris Rive Gauche, SU - 4 place Jussieu, Case 247, 75005 Paris}
\email{daddezio@imj-prg.fr}

\begin{abstract}
 We prove that an $F$-isocrystal over an abelian variety defined over a perfect field of positive characteristic has constant slopes. This recovers and extends a theorem of Tsuzuki for abelian varieties over finite fields. Our proof exploits the theory of monodromy groups of convergent isocrystals.
\end{abstract}

\maketitle

\tableofcontents

\section{Introduction}

In this article we chiefly study the behaviour of $F$-isocrystals over abelian varieties. Our main result is the following theorem.

\begin{theo}[Theorem \ref{main:t}]
	\label{i-main:t}
Let $A$ be an abelian variety over a perfect field $k$ of positive characteristic $p$. Every $F$-isocrystal over $A$ has constant slopes\footnote{We say that an $F$-isocrystal $(\calM,\Phi_\calM)$ over a variety $X$ has \textit{constant slopes} if for every closed point $i:x\hookrightarrow X$, the multiset of slopes of $(i^*\calM,i^*\Phi_\calM)$ does not depend on $x$.}.
\end{theo}
Theorem \ref{i-main:t} extends \cite[Thm.~3.7]{TsuNP} and agree with the general expectation that families of smooth projective varieties parametrised by abelian varieties have “small monodromy”, as explained in [\textit{ibid.}]. To prove it we use the theory of \textit{monodromy groups of convergent isocrystals}. This was firstly introduced by Crew in \cite{Cre92} and further studied in \cite{Pal15}, \cite{LP17}, \cite{AD18}, \cite{D'Ad20a}, and \cite{D'Ad20b}. Using this theory, it is possible to prove that the category of convergent isocrystals over $A$, denoted by $\Isoc(A)$, has a rather simple structure. More precisely, we prove the following result.

\begin{prop}[Proposition \ref{comm-fund-group:p}]\label{i-comm-fund-group:p} Let $\Isoc(A)$ be the Tannakian category of convergent isocrystals over $A$. The Tannaka group of $\Isoc(A)$ with respect to any fibre functor is commutative.
\end{prop}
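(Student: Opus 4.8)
The plan is to run an Eckmann--Hilton argument pitting the Tannakian group law of $G := \pi_1(\Isoc(A),\omega)$ against the group law of the abelian variety $A$. I fix $\omega$ to be the fibre functor at the origin $e \in A(k)$; since any two fibre functors give conjugate, hence abstractly isomorphic, Tannaka groups, treating this one suffices. The mechanism is that the multiplication $m \colon A \times A \to A$ pulls back to a tensor functor $m^*$, which should furnish a \emph{second} binary operation on $G$ that is at the same time a homomorphism for the first (Tannakian) one; two unital operations linked by an interchange law are forced to agree and be commutative.

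Concretely, I would first establish a Künneth isomorphism on Tannaka groups: the two projections $p_1,p_2 \colon A \times A \to A$ induce $\Phi \colon \pi_1(\Isoc(A\times A),\omega\boxtimes\omega)\iso G\times G$, where the fibre functor on the left is the fibre at $(e,e)$. This is compatible with $\omega$ along both $p_i^*$ and along $m^*$, the latter because $m(e,e)=e$. Granting $\Phi$, the multiplication gives a homomorphism $m_*\colon \pi_1(\Isoc(A\times A))\to G$, and I set $\mu := m_*\circ\Phi^{-1}\colon G\times G\to G$, a homomorphism of affine group schemes over $K_0 = W(k)[1/p]$.

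Next I read off the unit conditions. Let $i_1,i_2\colon A\to A\times A$ be $a\mapsto(a,e)$ and $a\mapsto(e,a)$. Since $p_2\circ i_1$ (resp. $p_1\circ i_2$) is the constant map through $e$, which factors through $\Spec k$, and a convergent isocrystal over $\Spec k$ is merely a finite-dimensional $K_0$-vector space, so that $\Isoc(\Spec k)$ has trivial Tannaka group, the pullback along such a constant map is trivial on Tannaka groups. Hence $\Phi\circ(i_1)_*(g)=(g,1)$ and $\Phi\circ(i_2)_*(g)=(1,g)$. Combined with $m\circ i_1 = m\circ i_2 = \id_A$, giving $m_*\circ(i_j)_*=\id_G$, this yields $\mu(g,1)=g=\mu(1,g)$ functorially in every $K_0$-algebra. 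The interchange law $\mu(g_1g_2,h_1h_2)=\mu(g_1,h_1)\,\mu(g_2,h_2)$, valid because $\mu$ is a group homomorphism, then forces $\mu(g,h)=gh=hg$, so $G$ is commutative.

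The main obstacle is the Künneth isomorphism $\Phi$; everything after it is formal. Equivalently, one must show that the external product $\Isoc(A)\boxtimes\Isoc(A)\to\Isoc(A\times A)$, $(\calE,\calF)\mapsto p_1^*\calE\otimes p_2^*\calF$, realizes $G\times G$ on Tannaka groups. The substantive inputs are full faithfulness of each $p_i^*$ with image closed under subobjects (yielding surjectivity of each projection), a generation statement that every object of $\Isoc(A\times A)$ is a subquotient of external products, and the commuting of the two images inside $\pi_1(\Isoc(A\times A))$ --- the genuinely geometric point that ``loops'' in the two factors slide past one another. I would extract these from a Künneth formula reducing $\Hom(\calE\boxtimes\calF,\calE'\boxtimes\calF')$ to $\Hom(\calE,\calE')\otimes\Hom(\calF,\calF')$ for convergent isocrystals, using the product geometry (and, where convenient, comparison with the already-understood behaviour over each factor) to carry the weight of the argument.
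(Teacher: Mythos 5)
Your Eckmann--Hilton skeleton is exactly the argument the paper uses for Proposition \ref{comm-fund-group:p}: the K\"unneth isomorphism plus the map $m_*$ induced by the multiplication makes $\pi_1(\Isoc(A),0_A)$ a group object in the category of affine group schemes, which forces commutativity; your explicit verification of the unit laws via $i_1,i_2$ (using that $p_2\circ i_1$ factors through $\Spec k$, whose category of isocrystals has trivial Tannaka group) and the interchange law is a correct spelling-out of what the paper delegates to the citation of Eckmann--Hilton. The genuine gap is in the step you yourself single out as the main obstacle, the K\"unneth isomorphism $\Phi$. Your plan for it --- a Hom-level K\"unneth formula $\Hom(\calE\boxtimes\calF,\calE'\boxtimes\calF')\simeq\Hom(\calE,\calE')\otimes\Hom(\calF,\calF')$ together with ``a generation statement'' --- delivers only one of the two halves needed. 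Full faithfulness of the external-product functor (plus closure of its image under subobjects) gives faithful flatness of $\Phi$; but it gives no purchase on the other half, namely that $\Phi$ is a closed immersion, i.e.\ that \emph{every} convergent isocrystal on $A\times A$ is a subquotient of sums of objects $p_1^*\calE\otimes p_2^*\calF$. That generation statement is the hard geometric content --- it amounts to the closed-immersion half of the K\"unneth theorem itself --- and no formal manipulation of Hom-groups \emph{between external products} can detect objects that are not a priori built from external products. As written, your proposal has no workable route to it.

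The paper never proves generation directly; it circumvents it. It proves exactness of the homotopy sequence $1\to\pi_1(\Isoc(Y),y)\to\pi_1(\Isoc(X\times Y),(x,y))\to\pi_1(\Isoc(X),x)\to 1$ using the criterion of \cite[Thm.~A.13]{DE20}, where the decisive verification --- that every $\calM$ on $X\times Y$ admits a subobject $\calN\subseteq\calM$ whose restriction to the slice $x\times Y$ is the maximal trivial subobject of $i^*\calM$ --- is done by taking $\calN=q^*q_*\calM$. This requires the existence of the right adjoint $q_*$ of $q^*$ for smooth proper morphisms and its compatibility with base change (Theorem \ref{LP:t}, due to Lazda--P\'al, a case of Berthelot's conjecture); that is the non-formal input carrying the whole proof. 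The two symmetric exact sequences (one for each projection) then combine to give that $\Phi$ is an isomorphism, with no generation statement ever needed. To close your argument you would have to import this pushforward machinery (or an equivalent result such as those of Di Proietto--Tonini--Zhang or Xu); the product geometry alone, manipulated at the level of Hom-groups, will not suffice.
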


Proposition \ref{i-comm-fund-group:p} is proved using an Eckmann--Hilton argument, exploiting the K\"unneth formula for these Tannaka groups (Proposition \ref{kunneth-formula:p}).  When the ring of Witt vectors of $k$ embeds into the field complex numbers, Proposition \ref{i-comm-fund-group:p} was also obtained independently by Pál in some unpublished notes via a reduction to complex flat connections. 

\spa

Over finite fields, knowing Proposition \ref{i-comm-fund-group:p}, Theorem \ref{i-main:t} follows from a combination of the theory of weights for overconvergent $F$-isocrystals, as developed in \cite{Ked06}, and the $p$-adic global monodromy theorem, proved in \cite[Thm. 4.9]{Cre92} and \cite[Thm. 3.4.4]{D'Ad20a}. In this particular case we can actually prove a stronger result. Write $K$ for the fraction field of the ring of Witt vectors of $k$ and choose an isomorphism $\iota:\bar{K}\iso \mathbb{C}$.

\begin{theo}[Theorem \ref{fini:t}]
	
	\label{i-fini:t}
	If $k$ is a finite field, every $\iota$-pure $F^n$-isocrystal over $A$ becomes constant after passing to a finite étale cover.
	
\end{theo}

  If $k$ is not finite, we cannot rely on the $p$-adic global monodromy theorem, since it is false already for ordinary elliptic curves over $\bar{\F}_p$. To prove Theorem \ref{i-main:t} we reduce instead to the case of generically isoclinic $F$-isocrystals, where the global constancy follows from the semi-continuity of the slope polygon.

\spa
If $X$ is a smooth proper variety over an algebraically closed field $k$, we also deduce from Proposition \ref{i-comm-fund-group:p} a comparison between isocrystals over $X$ and the ones over the Albanese variety $\mathrm{Alb}_X$. Let $\Isoc(X)_F\subseteq \Isoc(X)$ be the subcategory spanned by those convergent isocrystals which can be endowed with a Frobenius structure.

\begin{theo}[Theorem \ref{alb:t}]\label{i-alb:t}
	For every closed point $x\in |X|$, the associated Albanese morphism $f:X \to \mathrm{Alb}_X$ induces a faithfully flat morphism $$\pi_1(\Isoc(X)_F,x)^{\mathrm{ab}}\xrightarrow{f_*} \pi_1(\Isoc(\mathrm{Alb}_X)_F,0_{\mathrm{Alb}_X})$$ of affine group schemes, where $\pi_1(\Isoc(X)_F,x)$ and $\pi_1(\Isoc(\mathrm{Alb}_X)_F,0_{\mathrm{Alb}_X})$ are the Tannaka fundamental groups of $\Isoc(X)_F$ and $\Isoc(\mathrm{Alb}_X)_F$ with respect to $x$ and the identity element $0_{\mathrm{Alb}_X}$. The kernel of $f_*$ is a finite constant group scheme isomorphic to $(\mathrm{Pic}^{\tau}_{X/k}/\mathrm{Pic}^{0,\mathrm{red}}_{X/k})^\vee(k).$
\end{theo}
This theorem is an analogue of
\cite[Thm. 7.1]{Lan12} and \cite[Thm. 4.1]{BdS17}. The main tool we use here, besides Proposition \ref{i-comm-fund-group:p}, is the fact that unit-root $F$-isocrystals correspond to $p$-adic representations of the étale fundamental group of $X$.
\subsection*{Acknowledgements}
I am grateful to Tomoyuki Abe, Gregorio Baldi, Hélène Esnault, Chris Lazda, Ambrus Pál, and Fabio Tonini for many enlightening discussions. I would also like to thank Adrian Langer for his suggestion to apply Proposition \ref{i-comm-fund-group:p} to Albanese varieties, which led to Theorem \ref{i-alb:t}. I thank the organisers and the participants of the workshop “$F$-isocrystals and families of algebraic varieties” at the IMPAN, in Warsaw, for the interest shown in the results of this article. Finally, I thank the anonymous referee for all the comments and corrections which improved the article.

\spa

 The author was funded by the Deutsche Forschungsgemeinschaft (DFG, German Research Foundation) under Germany's Excellence Strategy – The Berlin Mathematics
Research Center MATH+ (EXC-2046/1, project ID: 390685689) and by the Max-Planck Institute for Mathematics.

\section*{Notation}
Let $k$ be a perfect field of positive characteristic $p$ and let $K$ be the fraction field of the ring of Witt vectors of $k$. For a smooth variety $X$ over $k$ we denote by $\Isoc(X)$ the category of $K$-linear convergent isocrystals over $X$, as defined in \cite{Ogu84}. If $X$ is geometrically connected and $\eta$ is a perfect point of $X$, we denote by $\pi_1(\Isoc(X),\eta)$ the Tannaka group of $\Isoc(X)$ with respect to the fibre functor induced by $\eta$ (see \cite[§2.1]{Cre92}). In addition, if $\calM$ is a convergent isocrystal over $X$, we denote by $G(\calM,\eta)$ the Tannaka group of the Tannakian subcategory $\langle \calM \rangle\subseteq \Isoc(X)$, spanned by $\calM$, with respect to the fibre functor induced by $\eta$. We use a similar notation for the other variants of $\Isoc(X)$ that will appear in this article. Also, if $G$ is an affine group scheme, we denote by $G^{\mathrm{ab}}$ the maximal commutative quotient, by $G^{\mathrm{diag}}$ the maximal pro-diagonalisable quotient, and by $G^{\mathrm{uni}}$ the maximal pro-unipotent quotient.

\spa

 Let $F:X\to X$ be the absolute Frobenius of $X$. For a positive integer $n$, we write $\Fnisoc(X)$ for the category of convergent $F^n$-isocrystals\footnote{We recall that by \cite[Thm. 0.7.2]{Ogu90} and \cite[Thm. 2.4.2]{Ber96}, the category $\Fnisoc(X)$ is equivalent to the category of $F^n$-isocrystals over the absolute crystalline site of $X$.} and $\Fiisoc(X)$ for $2\textrm{-}\varinjlim_n \Fnisoc(X)$. If $(\calM,\Phi_\calM)$ is a convergent $F^n$-isocrystal, we write $(\calM,\Phi_\calM^{\infty})$ for its image in $\Fiisoc(X)$. Further, we write $\Isoc(X)_F$ for the smallest strictly full abelian $\otimes$-subcategory of $\Isoc(X)$ closed under subquotients containing all the convergent isocrystals which can be endowed with a Frobenius structure.

 \spa

 	Suppose $k$ algebraically closed. As in \cite[Definition 3.1.2]{D'Ad20b}, we denote by $\Isocl(X,\eta)$ the Tannaka category of convergent isocrystals with \textit{punctual $\Qpu$-structure at $\eta$}. We recall that this category is the category of convergent isocrystal endowed with the choice of a $\Qpu$-linear vector subspace $V_\calM\subseteq \calM_\eta$ such that $V_\calM\otimes_{\Qpu}K(\eta)=\calM_\eta$, where $K(\eta)$ is the fraction field of the ring of Witt vectors of $\Gamma(\eta,\calO_\eta)$. Moreover, we denote by $\Isocl(X,\eta)_F$ the Tannakian subcategory of $\Isocl(X,\eta)$ spanned by the essential image of the functor $\Lambda_\eta:\Fiisoc(X)\to \Isocl(X,\eta)$ constructed in [\textit{ibid.}, Definition 3.1.6].

\section{K\"unneth formula}

In this section we want to prove the K\"unneth formula for the fundamental group of convergent isocrystals. The main ingredient is the following existence theorem.
\begin{theo}[{\cite[§8]{LP17}}\footnote{Note that Theorem \ref{LP:t} can be also obtained as a consequence of \cite{DTZ18} or \cite{Xu19}.}]\label{LP:t}For a smooth morphism $f:Y\to X$ of smooth proper varieties, the functor $f^*:\Isoc(X)\to \Isoc(Y)$ admits a right adjoint $f_*$. The formation of $f_*$ is compatible with base change with respect to morphisms $Z\to X$ where $Z$ is smooth and proper.
\end{theo}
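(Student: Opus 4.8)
The plan is to produce the right adjoint as the degree-zero relative cohomology of $f$. Concretely, set $f_*\calE := R^0 f_{\mathrm{conv},*}\calE$, the relative convergent cohomology in degree $0$, and reduce the entire statement to two technical inputs about the relative cohomology of a smooth proper morphism: \emph{coherence} (each $R^i f_{\mathrm{conv},*}\calE$ is again a convergent isocrystal) and \emph{base change} (its formation commutes with $Z \to X$ for $Z$ smooth proper). Granting these, the adjunction is formal. Writing $H^0_{\mathrm{conv}}(Y,-)=\Hom_{\Isoc(Y)}(\calO_Y,-)$ for horizontal global sections and $\HHom$ for the internal hom, rigidity gives
\[
\Hom_{\Isoc(Y)}(f^*\calF,\calE)=H^0_{\mathrm{conv}}\!\bigl(Y,\HHom(f^*\calF,\calE)\bigr)=H^0_{\mathrm{conv}}\!\bigl(Y,f^*(\calF^\vee)\otimes\calE\bigr).
\]
The projection formula $R^0 f_{\mathrm{conv},*}\bigl(f^*(\calF^\vee)\otimes\calE\bigr)=\calF^\vee\otimes f_*\calE$, combined with the degree-$0$ edge isomorphism of the Leray spectral sequence for $Y\to X\to\Spec k$, then identifies this with $H^0_{\mathrm{conv}}\bigl(X,\HHom(\calF,f_*\calE)\bigr)=\Hom_{\Isoc(X)}(\calF,f_*\calE)$. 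The base-change clause of the theorem is precisely the base-change input for $R^0 f_{\mathrm{conv},*}$.

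So the real work is the coherence and base change of relative convergent cohomology. For coherence I would argue locally on $X$, which is permissible since the assertion is local: I may assume $X$ admits a smooth $W(k)$-lift $\calX$ and that $f$ lifts, in a neighbourhood, to a smooth proper morphism $g:\calY\to\calX$. Realizing $\calE$ as a module with integrable, convergent connection on the relevant tube, I would form the relative de Rham complex of $g$; the Katz–Oda construction equips its cohomology with the Gauss–Manin connection, and degeneration of the relative Hodge-to-de Rham spectral sequence (Ogus) shows the $R^i$ are coherent. The crux is that this connection is \emph{convergent} in Ogus's sense, i.e.\ satisfies the required bound on the growth of its Taylor coefficients; this has to be deduced from the convergence of the connection on $\calE$ together with a uniform control of the convergence radius of Gauss–Manin, using properness to trivialize the geometry fibrewise.

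For base change I would exploit that, $f$ being smooth and proper, the relative cohomology is computed by a perfect complex whose formation commutes with arbitrary base change: in the rational (isocrystal) setting the Hodge spectral sequence degenerates and there is no torsion to obstruct cohomology-and-base-change. Pulling back along $u:Z\to X$ with $Z$ smooth proper then yields a canonical isomorphism $u^* f_*\calE \iso f'_*\,u'^*\calE$, where $u':Y_Z\to Y$ and $f':Y_Z\to Z$ denote the base changes.

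I expect the convergence estimate in the coherence step to be the main obstacle. Coherence and integrability of $R^0 f_{\mathrm{conv},*}\calE$ as a module with connection are comparatively formal, but verifying that the Gauss–Manin connection meets Ogus's convergence criterion — rather than merely being a module with integrable connection — requires the genuinely $p$-adic analytic input, and is exactly where the smoothness and properness of $f$ enter in an essential way. (This is the content of \cite[§8]{LP17}; as noted, it can alternatively be extracted from \cite{DTZ18} or \cite{Xu19}.)
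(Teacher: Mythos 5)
The first thing to note is that the paper does not prove this statement at all: Theorem \ref{LP:t} is imported wholesale from \cite[§8]{LP17}, with a footnote observing that it can also be extracted from \cite{DTZ18} or \cite{Xu19}. So there is no internal proof to match your sketch against; it has to be measured against those references. Your formal skeleton is the right one and does match their structure: define $f_*\calE=R^0f_{\mathrm{conv},*}\calE$, obtain the adjunction from rigidity, the degree-zero Leray edge map and the projection formula, and isolate coherence and base change of relative convergent cohomology as the actual content. (Minor caveat: the projection formula is a third input of the same kind, not a formality; it can be deduced from coherence plus base change by a fibrewise argument over closed points, but it should be stated and justified as such.)

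The genuine gap is in your coherence step. You assert that, locally on $X$, you may assume $f$ lifts to a smooth proper morphism $g:\calY\to\calX$ of smooth formal $W(k)$-schemes. This is false: smooth proper morphisms need not lift to $W(k)$ even after shrinking the base, since the obstruction lives in cohomology of the fibres and is not affected by localising on $X$. Serre's classical examples of smooth projective varieties over $\bar{\F}_p$ admitting no smooth formal lift to characteristic zero give a counterexample already when $X$ is a point (which is permitted in the theorem), and taking $Y=Y_0\times X\to X$ with $Y_0$ non-liftable propagates the failure over any base: a local lift of $f$ would, by specialising along a $W(k)$-point of $\calX$, produce a lift of $Y_0$. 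This non-liftability is precisely why Berthelot's-conjecture-type statements are hard, and why the actual proofs are organised to avoid it: \cite{DTZ18} works on the crystalline site, where no liftings are required; \cite{Xu19} proves a Frobenius-descent and comparison theorem without assuming lifts of Frobenius or of $f$; and the rigid-analytic approach of \cite{LP17} uses local embeddings of $Y$ into smooth formal $\calX$-schemes (e.g.\ formal projective space) rather than lifts of $f$, running the convergence argument on tubes. Moreover, as you concede in your final sentence, the one genuinely $p$-adic step — the convergence of the Gauss–Manin connection, which for constant coefficients Ogus \cite{Ogu84} obtained from the Frobenius structure, and which for isocrystals without Frobenius structure is exactly the hard new content of these references — is deferred back to the very results being proven. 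As it stands, the proposal is a correct formal reduction wrapped around a black box, and the one concrete assertion it makes about the inside of the box (local liftability of $f$) is wrong.
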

\begin{prop}
	\label{kunneth-formula:p}
	Let $X$ and $Y$ be two smooth proper connected varieties endowed with the choice of rational points $x$ and $y$. The projections of the product $X\times Y$ to the two factors induce an isomorphism $$\pi_1(\Isoc(X\times Y),(x,y))\iso \pi_1(\Isoc(X),x)\times \pi_1(\Isoc(Y),y).$$
\end{prop}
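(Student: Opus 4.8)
The plan is to build $\pi_1(\Isoc(X\times Y),(x,y))$ out of the two factors using the projections together with the fibre inclusions, and then to recognise the result as a direct product by a purely group-theoretic argument. Write $G:=\pi_1(\Isoc(X\times Y),(x,y))$, $G_X:=\pi_1(\Isoc(X),x)$ and $G_Y:=\pi_1(\Isoc(Y),y)$. The two projections induce $\phi_X\colon G\to G_X$ and $\phi_Y\colon G\to G_Y$, hence a single morphism $\phi=(\phi_X,\phi_Y)\colon G\to G_X\times G_Y$, and the claim is that $\phi$ is an isomorphism. Note that $p_X$ is a smooth morphism of smooth proper varieties, being the base change of $Y\to\Spec k$, so Theorem~\ref{LP:t} provides a right adjoint $(p_X)_*$ to $p_X^*$ whose formation commutes with base change, and symmetrically for $p_Y$. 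I will also use the two embeddings $s\colon X\hookrightarrow X\times Y$, $z\mapsto(z,y)$, and $t\colon Y\hookrightarrow X\times Y$, $z'\mapsto(x,z')$, which are simultaneously sections of the projections ($p_X\circ s=\id_X$, $p_Y\circ t=\id_Y$) and the inclusions of the fibres through the base point ($t$ is the fibre of $p_X$ over $x$, and $s$ the fibre of $p_Y$ over $y$).

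First I would establish the split exact sequence
\[
1\longrightarrow G_Y\xrightarrow{\ t_*\ } G\xrightarrow{\ \phi_X\ } G_X\longrightarrow 1,
\]
split by $s_*$, together with its mirror image for $p_Y$. That $\phi_X$ is faithfully flat follows from the criterion of Deligne--Milne: the section $s$ makes $s^*$ a retraction of $p_X^*$, and the projection formula together with the base-change identity $(p_X)_*\mathbf 1=\mathbf 1$ (valid because the fibre $Y$ is connected and proper, so its $H^0$ is $K$ in families) gives $(p_X)_*p_X^*N\cong N$, whence $p_X^*$ is fully faithful; the same fibrewise computation shows its essential image is stable under subobjects, since a subobject of a trivial isocrystal on $Y$ is trivial. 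That $t_*$ is a closed immersion is immediate from $p_Y\circ t=\id_Y$, which forces $t^*p_Y^*=\id_{\Isoc(Y)}$, so every object of $\Isoc(Y)$ lies in the image of $t^*$. The inclusion $t_*(G_Y)\subseteq\Ker\phi_X$ holds because $p_X\circ t$ is the constant map to $x$, so $\phi_X\circ t_*$ factors through the trivial group.

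The heart of the argument is the reverse inclusion $\Ker\phi_X\subseteq t_*(G_Y)$, which amounts to showing that an isocrystal $W$ on $X\times Y$ whose restriction $t^*W$ to the fibre over $x$ is trivial already descends along $p_X$. For this I would set $W_0:=(p_X)_*W$ and consider the counit $c\colon p_X^*W_0\to W$. By the base-change compatibility of $(p_X)_*$, the fibre of $W_0$ at any point $x'$ of $X$ computes $H^0$ of $W$ restricted to $\{x'\}\times Y$; at $x$ this has dimension equal to $\mathrm{rk}\,W$ since $t^*W$ is trivial. Because $W_0$ is an isocrystal, its rank is constant on the connected variety $X$, so $H^0$ of $W$ on every fibre has the full dimension $\mathrm{rk}\,W$, forcing $W$ to be fibrewise trivial over all of $X$; the counit $c$ is then an isomorphism on each fibre, hence an isomorphism, and $W\cong p_X^*W_0$ lies in the image of $p_X^*$. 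This propagation of triviality from a single fibre to all fibres, via the constancy of the rank of $(p_X)_*W$, is the step I expect to be the main obstacle, and it is exactly where Theorem~\ref{LP:t} is indispensable.

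Finally I would conclude by pure group theory. The two split exact sequences identify $\Ker\phi_X=t_*(G_Y)$ and $\Ker\phi_Y=s_*(G_X)$ as closed normal subgroups of $G$; they intersect trivially because $\phi_X$ is injective on $s_*(G_X)$ (as $\phi_X\circ s_*=\id$), and they generate $G$ because the $p_X$-sequence is split by $s_*$. Two normal subgroups with trivial intersection commute, so multiplication gives an isomorphism $s_*(G_X)\times t_*(G_Y)\iso G$; transporting $\phi$ through it shows $\phi$ is an isomorphism onto $G_X\times G_Y$. I emphasise that this direct-product conclusion is reached without any prior knowledge of commutativity --- indeed it is this K\"unneth formula that will later feed the Eckmann--Hilton argument for Proposition~\ref{i-comm-fund-group:p}.
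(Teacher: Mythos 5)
Your architecture (a split exact sequence for each projection, assembled by group theory at the end) matches the paper's, and $(p_X)_*$ plus base change from Theorem~\ref{LP:t} is the right tool; but there is a genuine gap at the step you yourself call the heart of the argument. Exactness of $1\to G_Y\to G\to G_X\to 1$ at the middle term does \emph{not} ``amount to'' the statement that every $W$ with $t^*W$ trivial descends along $p_X$. Under Tannaka duality, that descent statement only says that every representation of $G$ killed by $t_*(G_Y)$ is killed by $\Ker\phi_X$, i.e.\ that $\Ker\phi_X$ is contained in the smallest \emph{normal} closed subgroup of $G$ containing $t_*(G_Y)$. Since normality of $t_*(G_Y)$ in $G$ is precisely part of what exactness asserts, and is unknown at this stage, you cannot conclude $\Ker\phi_X\subseteq t_*(G_Y)$. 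The gap is not cosmetic: every fact you verify can hold while the conclusion fails. Take $G=D_4=\langle r,s\mid r^4=s^2=1,\ srs=r^{-1}\rangle$, $G_X=G_Y=\Z/2$, with $\phi_X,\phi_Y$ the quotients by $N_X=\langle r^2,s\rangle$ and $N_Y=\langle r^2,rs\rangle$, and with $t_*\colon\Z/2\iso\{1,s\}\subseteq G$, $s_*\colon\Z/2\iso\{1,rs\}\subseteq G$. Then $s_*$ and $t_*$ are sections of $\phi_X$ and $\phi_Y$, the composites $\phi_X\circ t_*$ and $\phi_Y\circ s_*$ are trivial, retractions exist (so restriction to $\{1,s\}$ is even essentially surjective), and your descent condition holds for both projections: a representation of $D_4$ on which $s$ acts trivially is killed by all conjugates of $s$, hence by the normal closure $\langle\langle s\rangle\rangle=N_X$, hence is pulled back from $G_X$. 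Yet $(\phi_X,\phi_Y)\colon D_4\to\Z/2\times\Z/2$ has kernel the centre $\{1,r^2\}$, so $G\ne G_X\times G_Y$. What fails there is exactly the condition your criterion omits: for the $2$-dimensional irreducible representation $W$, the maximal trivial subobject of $W|_{\{1,s\}}$ is the $(+1)$-eigenline of $s$, and it extends to no subrepresentation of $W$.

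The correct criterion --- this is \cite[Thm.~A.13]{DE20}, of Esnault--Hai--Sun type, and it is what the paper invokes --- requires, besides closed immersion, faithful flatness, trivial composite and observability, that for \emph{every} isocrystal $\calM$ on $X\times Y$, not only the fibrewise-trivial ones, there exist a subobject $\calN\subseteq\calM$ whose restriction $i^*\calN$ to the fibre is the \emph{maximal trivial subobject} of $i^*\calM$. The good news is that your own computation proves exactly this once you run it on an arbitrary $\calM$: take $\calN:=p_X^*(p_X)_*\calM$ with the counit map; base change gives $i^*p_X^*(p_X)_*\calM\simeq H^0(Y,i^*\calM)\otimes\calO$, which is the maximal trivial subobject of $i^*\calM$, and the counit is injective because $i^*$ of its kernel vanishes while ranks of isocrystals are constant. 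With this replacement both of your sequences are genuinely exact, and your concluding group-theoretic paragraph (which is correct, and is the same combination step as in the paper) finishes the proof. Two minor points in addition: faithful flatness of $\phi_X$ follows at once from the group-level identity $\phi_X\circ s_*=\id$, so the Deligne--Milne detour is unnecessary; and a retraction $s^*p_X^*\simeq\id$ by itself yields faithfulness, not fullness, of $p_X^*$, so that part of your argument was in any case resting on the descent mechanism rather than on the retraction.
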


\begin{proof}
	We denote by $q:X\times Y \to X$ the projection to the first factor and by $i$ both $x\hookrightarrow X$ and $x\times Y\hookrightarrow X\times Y$. These morphisms induce the following cartesian diagram
	\begin{equation*}
		\begin{tikzcd}
			 x\times Y \arrow[r,"i",hook] \arrow[rd,phantom] \arrow[d,two heads,"q"]&  X\times Y \arrow[d,two heads, "q"]\\
			x \arrow[r, "i",hook] & X.
		\end{tikzcd}
	\end{equation*}
	Moreover, we get the following sequence of affine group schemes over $K$
	\begin{equation}\label{seq:eq}
		1\to \pi_1(\Isoc(Y),y)\xrightarrow{\alpha} \pi_1(\Isoc(X\times Y),(x,y)) \xrightarrow{\beta} \pi_1(\Isoc(X),x)\to 1,
	\end{equation}
where $\alpha$ is induced by $i^*$ and $\beta$ by $q^*$. We want to use \cite[Thm.~A.13]{DE20} to show that (\ref{seq:eq}) is an exact sequence.

\spa

First, note that the projection $X\times Y\twoheadrightarrow Y$ and the closed immersion $X\times y\hookrightarrow X\times Y$ induce respectively a retraction for $\alpha$ and a section for $\beta$. This shows that $\alpha$ is a closed immersion, $\beta$ is faithfully flat, and $i^*:\Isoc(X\times Y)\to \Isoc(Y)$ is essentially surjective, thus \textit{observable}\footnote{For the definition of an observable functor see \cite[Definition A.2]{DE20}.}. It is also clear by construction that $\beta \circ \alpha$ is trivial.

\spa

 It remains to show that for every convergent isocrystal $\calM$ over $X\times Y$, there exists $\calN\subseteq \calM$, such that $i^*\calN$ is the maximal trivial subobject of $i^*\calM$. We claim that we can take as $\calN$ the convergent isocrystal $q^*q_*\calM$ equipped with the adjunction morphism $q^*q_*\calM\to \calM$. Indeed, by the compatibility of the formation of direct image with base change given by Theorem \ref{LP:t}, we have a natural isomorphism $i^*q^*q_{*}\calM\simeq q^*q_{*}i^*\calM.$ Combining this with the fact that $q_{*}i^*\calM=H^0(Y,i^*\calM)$, we deduce that $i^*q^*q_{*}\calM$ is the maximal trivial subobject of $i^*\calM$. In addition, since $i^*$ is an exact $\otimes$-functor, this also implies that $q^*q_*\calM\to \calM$ is an injective morphism. This concludes the proof of the exactness of (\ref{seq:eq}). For symmetry reasons, we deduce that the analogue sequence where $X$ and $Y$ are exchanged is also exact. Combining these two facts, we get the desired result.
\end{proof}

\begin{rema}
	
	If $X$ and $Y$ are projective one can alternatively recover Proposition \ref{kunneth-formula:p} from \cite[Thm. 7.1]{LP17}. A variant of Proposition \ref{kunneth-formula:p} is also proven in \cite[Thm. III]{DTZ18}.

\end{rema}

\section{Isocrystals with commutative monodromy}
This section is an interlude on convergent isocrystals with commutative monodromy. The main result in this section is that every Frobenius structure on these isocrystals has constant slopes (Proposition \ref{comm-then-cons:p}). As we will see in §\ref{isoc-over-abel:s}, over abelian varieties the monodromy group of a convergent isocrystal is always commutative.

\begin{lemm}\label{irr:l}
	Suppose $k$ algebraically closed and let $(\calM,\Phi_\calM)$ be a convergent $F^n$-isocrystal over $X$. If $(\calM,\Phi_\calM^m)$ is irreducible for every $m>0$, then $\calM$ is irreducible.
\end{lemm}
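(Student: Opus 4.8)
The plan is to translate irreducibility into a statement about a single semilinear operator over the point $\Spec k$ and then invoke the Dieudonné--Manin classification. Since the absolute Frobenius induces an auto-equivalence $(F^n)^*$ of $\Isoc(X)$ and $\Phi_\calM$ is an isomorphism $(F^n)^*\calM\iso\calM$, the assignment $\sigma(\calN):=\Phi_\calM((F^n)^*\calN)$ defines a rank-preserving automorphism of the finite-length lattice of subobjects of $\calM$, and a subobject $\calN\subseteq\calM$ underlies a sub-$F^{nm}$-isocrystal of $(\calM,\Phi_\calM^m)$ exactly when $\sigma^m(\calN)=\calN$. I argue by contraposition: assuming $\calM$ reducible in $\Isoc(X)$, I produce some $m$ for which $\sigma^m$ fixes a proper nonzero subobject.

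First I reduce to the isotypic case. The socle of $\calM$ is canonical, hence $\sigma$-stable, so if $\calM$ is not semisimple this already gives a proper $\sigma$-stable subobject with $m=1$. If $\calM$ is semisimple, $\sigma$ permutes its finitely many isotypic components, so a suitable power $\sigma^m$ fixes each, and if there are at least two components we are done. There remains the case $\calM\cong S^{\oplus n}$ with $S$ simple and $n\geq 2$. Here $\Phi_\calM$ forces $(F^n)^*S\cong S$, so $S$ admits an $F^n$-structure $\psi$, and the multiplicity space $V:=\Hom_{\Isoc(X)}(S,\calM)$ inherits the bijection $g(v)=\Phi_\calM\circ(F^n)^*(v)\circ\psi^{-1}$. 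The decisive point is that $g$ is $\sigma^n$-\emph{semilinear} — precisely because $(F^n)^*$ twists the coefficient field $K$ by its Frobenius — so that $(V,g)$ is an $F^n$-isocrystal over $\Spec k$ carrying a compatible action of the division algebra $D:=\End_{\Isoc(X)}(S)$. Under the multiplicity correspondence, subobjects of $\calM$ match $D$-submodules of $V$ and $\sigma$ matches $g$, so $(\calM,\Phi_\calM^m)$ is reducible iff $(V,g^m)$ has a proper nonzero $g^m$-stable $D$-submodule.

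Now I apply Dieudonné--Manin over the algebraically closed field $k$. The slope decomposition of $(V,g)$ is canonical, hence $D$-stable, so if $(V,g)$ has more than one slope I find a proper sub at $m=1$. If $(V,g)$ is isoclinic of slope $s/r$, I raise the Frobenius to the power $m=r$: then $(V,g^r)$ is isoclinic of integral slope and, after an auxiliary rank-one twist, is unit-root. Over $\Spec k$ a unit-root $F$-isocrystal is determined by its Frobenius-fixed points, a vector space over a finite unramified extension of $\Q_p$ equipped with the residual action of the Frobenius-fixed subalgebra of $D$; since the $D$-multiplicity $n$ is at least $2$, this module has a proper nonzero submodule, yielding a proper $g^r$-stable $D$-submodule of $V$. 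This contradicts the hypothesis, so $n=1$ and $\calM$ is irreducible.

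I expect the main obstacle to be conceptual rather than computational: recognizing that the multiplicity Frobenius $g$ is semilinear is exactly what makes the statement true — a genuinely $K$-linear $g$ could well be irreducible together with all of its powers, since $K$ is not algebraically closed — and it is what lets Dieudonné--Manin convert "irreducible for every power'' into the slope-denominator mechanism. The secondary technical point is to check carefully that both the slope decomposition and the final unit-root splitting remain compatible with the $\End(S)$-module structure on $V$.
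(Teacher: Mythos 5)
Your architecture is correct and can be completed, and it is genuinely different from the paper's proof. The paper also starts from the observation that Frobenius pullback permutes the isomorphism classes of irreducible subobjects, but instead of decomposing $\calM$ it fixes a single irreducible subobject $\calN\subseteq\calM$, endows it with an $F^{nm}$-structure $\Phi_\calN$ for suitable $m$, and then works with the internal Hom $\calP=\calN^\vee\otimes\calM$: the maximal trivial subobject $\calT\subseteq\calP$ comes from $\Spec(k)$, and after replacing $\Phi_\calN$ by $p^s\Phi_\calN^r$ one of its slopes becomes $0$, so over the algebraically closed field $k$ it acquires a Frobenius-fixed global section. Such a section is a nonzero morphism $(\calN,\Phi_\calN^\infty)\to(\calM,\Phi_\calM^\infty)$ in $\Fiisoc(X)$, and irreducibility of $(\calM,\Phi_\calM^\infty)$ forces $\calN=\calM$, hence $\calM$ is irreducible. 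Both proofs thus rest on the same ultimate input --- existence of Frobenius-fixed vectors over $\Spec(k)$ after a slope normalisation, which is your unit-root step --- but the internal-Hom trick lets the paper bypass the socle/isotypic reduction, the multiplicity space, and every question about $D=\End(S)$; your route is more explicit about where ``for every $m$'' enters (order of the permutation, denominator of the slope), at the price of genuine semilinear algebra.

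That semilinear algebra is exactly where your two flagged points sit, and neither is a routine check, precisely because $D$ only twisted-commutes with $g$: writing $R_d$ for right multiplication by $d$, one has $g\circ R_d=R_{\tau(d)}\circ g$ with $\tau(d)=\psi\circ(F^n)^*(d)\circ\psi^{-1}$, so $R_d$ is \emph{not} an endomorphism of the isocrystal $(V,g)$, and ``canonical, hence $D$-stable'' is not by itself a proof. Both claims are nevertheless true. For the slope decomposition: the image $R_D\subseteq\End_K(V)$ is a $K$-subspace satisfying $gR_Dg^{-1}=R_D$, hence a sub-$F^n$-isocrystal of $\End_K(V)$ (Frobenius $A\mapsto gAg^{-1}$) all of whose nonzero elements are invertible; since composition adds slopes and a finite-dimensional isocrystal has only finitely many slopes, a nonzero element of nonzero slope would have powers of unbounded slope, so $R_D$ lies in the slope-zero part $\bigoplus_s\End_K(V_s)$, which says exactly that every $R_d$ preserves each $V_s$. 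For the unit-root step: $\tau$ preserves the unique valuation of the division algebra $D$ extending that of $K$ (its restriction to $K$ is $\sigma^n$, which is valuation-preserving), hence preserves the valuation ring of $D$, so $(D,\tau^r)$ is itself unit-root and, $k$ being algebraically closed, $D\simeq D_0\otimes_{\Q_{p^{nr}}}K$ with $D_0=D^{\tau^r=\id}$ a division ring; likewise $V^{h=1}$ is a $\Q_{p^{nr}}$-form of $V$, free over $D_0$ of rank equal to the multiplicity ${\geq}2$, and any proper nonzero $D_0$-submodule of $V^{h=1}$ then spans a proper nonzero $h$-stable (equivalently $g^r$-stable) $D$-submodule of $V$, because $D_0$ spans $D$. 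With these two arguments inserted, your contrapositive proof is complete.
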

\begin{proof}
	Let $\calN\subseteq \calM
	$ be an irreducible subobject. By \cite[Corollary 6.2]{Laz17}, the functor $(F^n)^*$ is an autoequivalence of $\Isoc(X)$, thus $(F^n)^*$ permutes the isomorphism classes of the irreducible subobjects of $\calM$. We deduce that after possible replacing $n$ with a multiple, we have that $(F^n)^*\calN\simeq \calN$. In other words, $\calN$ can be endowed with some $F^n$-structure $\Phi_\calN$. Write $(\calP,\Phi_\calP^\infty)$ for $(\calN,\Phi_\calN^\infty)^\vee\otimes (\calM,\Phi_\calM^\infty)$. If $\calT\subseteq \calP$ is the maximal trivial subobject of $\calP$, it defines a subobject $(\calT,\Phi_\calT^\infty)\subseteq (\calP,\Phi_\calP^\infty)$. Up to replacing $\Phi_\calN$ with $p^s\Phi_\calN^r$ for some $(s,r)\in \Z\times\Z_{>0}$, we may assume that one of the slopes of $(\calT,\Phi_\calT^\infty)$ is $0$. Since $(\calT,\Phi_\calT^\infty)$ comes from a convergent $F^\infty$-isocrystal over $\Spec(k)$, we deduce that $(\calT,\Phi_\calT^\infty)$ has a non-trivial global section in $\Fiisoc(X)$. This implies that there exists a non-zero morphism $(\calN,\Phi_\calN^\infty)\to (\calM,\Phi_\calM^\infty)$. Since $(\calM,\Phi_\calM^\infty)$ is irreducible, we deduce that $(\calN,\Phi_\calN^\infty)=(\calM,\Phi_\calM^\infty)$. In turn, this implies that $\calM$ is irreducible, as we wanted.
\end{proof}

\begin{prop}\label{comm-then-cons:p}

	Let $(\calM,\Phi_\calM)$ be a convergent $F^n$-isocrystal over a geometrically connected variety $X$ over a perfect field $k$. If the monodromy group $G(\calM,\eta)$ is commutative for some perfect point $\eta$, then the slopes of $(\calM,\Phi_\calM)$ are constant.
\end{prop}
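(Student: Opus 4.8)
The plan is to reduce, through a sequence of dévissages, to rank one objects, for which constancy of the slope is already known (as recalled in the introduction). The two lemmas just proved are meant to be applied at the very end, so the whole proof is essentially an organised descent toward an irreducible object whose monodromy is diagonalisable.

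First I would pass to the case $k=\bar k$. The Newton polygon of $(\calM,\Phi_\calM)$ at a point is computed over the perfect closure of the residue field, hence is unchanged by the base change along $X\to X\times_k\bar k$; since $X$ is geometrically connected, the target is again connected, so it suffices to treat it. Writing $K'=\mathrm{Frac}(W(\bar k))$ for the enlarged coefficient field, the monodromy group of the pulled-back isocrystal is a closed subgroup of $G(\calM,\eta)\times_K K'$ (every object of the Tannakian category generated by the pullback of $\calM$ is a subquotient of a tensor construction applied to that pullback), and a closed subgroup of a commutative group scheme is commutative. Thus the hypothesis survives and I may assume $k$ algebraically closed.

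Next I would reduce by slopes. For a short exact sequence of $F^n$-isocrystals, semisimplicity of $F$-isocrystals over a perfect field shows that at each point the Newton polygon of the middle term is the union, with multiplicities, of those of the sub and the quotient. Hence the slopes of $(\calM,\Phi_\calM)$ are constant as soon as the slopes of every Jordan--H\"older factor of its image $(\calM,\Phi_\calM^\infty)$ in $\Fiisoc(X)$ are constant, and it is enough to treat a single such factor $(\calN,\Psi^\infty)$. Being irreducible in $\Fiisoc(X)$, this factor satisfies that $(\calN,\Psi^m)$ is irreducible for every $m>0$, so Lemma \ref{irr:l} shows that the underlying convergent isocrystal $\calN$ is irreducible. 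Moreover $\calN$ is a subquotient of $\calM$ in $\Isoc(X)$, so $G(\calN,\eta)$ is a quotient of $G(\calM,\eta)$ and remains commutative.

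At this point $\calN$ is a faithful irreducible representation of the commutative group $G(\calN,\eta)$; once this group is known to be pro-diagonalisable, Lemma \ref{mt-group-rep:l} forces $\calN$ to have rank one, and a rank one $F$-isocrystal has constant slope, which closes the argument. The step I expect to be the crux is exactly this pro-diagonalisability. Killing the pro-unipotent part is easy: it acts trivially on any irreducible representation, and its invariants form a nonzero subobject, so faithfulness leaves a group of multiplicative type. The delicate point is ruling out a non-split part, since over $K$ there exist non-split tori whose irreducible representations have rank $>1$; here the Frobenius structure must be used to show that the relevant character lattice carries trivial Galois action. (Morally the statement is robust even in this edge case: conjugate characters share the same rational slope, so a hypothetical non-split factor would still be isoclinic with constant slope; but the clean route is via rank one, which is why pro-diagonalisability is the key input.)
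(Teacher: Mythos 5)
Your reductions track the paper's own proof step by step: the passage to $k$ algebraically closed, the dévissage (via additivity of Newton polygons in exact sequences) to an irreducible Jordan--Hölder factor $(\calN,\Psi^\infty)$ of the image in $\Fiisoc(X)$, the remark that $G(\calN,\eta)$ is a quotient of $G(\calM,\eta)$ and hence commutative, the use of Lemma \ref{irr:l} to conclude that $\calN$ is irreducible in $\Isoc(X)$, and the endgame ``rank one $\Rightarrow$ constant slopes'' via \cite[Thm.~3.12]{Ked16}. All of this is correct and is exactly what the paper does.

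The gap is the step you yourself call the crux, and it is genuine: you never prove that $G(\calN,\eta)$ is pro-diagonalisable, and the mechanism you propose for it --- that the Frobenius structure forces the character lattice of the multiplicative part to carry trivial Galois action --- is not an argument. The Frobenius structure is a $\sigma$-semilinear isomorphism $F^*\calN\iso\calN$; it does not act on $X^*\bigl(G(\calN,\eta)_{\bar{K}}\bigr)$ in any evident way, and you give no construction. Your instinct that pure group theory cannot finish is sound: a commutative group of multiplicative type over $K$ can indeed have irreducible representations of rank $>1$ (e.g.\ $\mathrm{Res}_{K'/K}\Gm$ acting on $K'$ for a quadratic extension $K'/K$, where $\End_G(V)=K'$). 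But the paper's way out is not splitness; it is the arithmetic of the coefficient field, which is entirely absent from your proposal. In the proof of Lemma \ref{mt-group-rep:l}, Lang's theorem that $K=\mathrm{Frac}(W(\bar{k}))$ is a $C_1$ field gives $\mathrm{Br}(K)=0$; combined with Schur's lemma this is used to get $\End_G(V)=K$, hence $V\otimes_K\bar{K}$ remains irreducible, and over the algebraically closed field $\bar{K}$ commutativity alone forces rank one. In other words, the non-split phenomenon you worry about is excluded by controlling the endomorphism algebra and then passing to $\bar{K}$, never by trivialising a Galois action, and Frobenius plays no role in this step. Your parenthetical fallback (Galois-conjugate characters share the same slope, so even a non-split factor would be isoclinic) is closer in spirit to a viable repair, but as written it is a remark rather than a proof: it would require extending the coefficients of the isocrystal, making sense of slopes there, and descending constancy, none of which you carry out. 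As it stands, the proposal reduces the proposition to an unproved claim, and the one ingredient the paper actually uses to close it --- the $C_1$ property of $K$ --- never appears.
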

\begin{proof}
Thanks to \cite[(2.1.10)]{Cre92} we may assume $k$ algebraically closed and we may replace $(\calM,\Phi_\calM)$ with the induced convergent $F^\infty$-isocrystal $(\calM,\Phi_\calM^\infty)$. By looking at the irreducible subquotients, we may further assume that $(\calM,\Phi_\calM^\infty)$ is irreducible. The aim is to show that generically $(\calM,\Phi_\calM^\infty)$ admits a unique slope. Indeed, thanks to \cite[Theorem 3.12]{Ked16}, this would imply that $(\calM,\Phi_\calM^\infty)$ admits a unique slope globally. 

\spa

To prove this we choose a closed point $i:x\hookrightarrow X$ where $(\calM,\Phi_\calM^\infty)$ has the same slopes as the generic ones. Let $i^*:\langle \calM, \Phi^\infty_\calM \rangle\to \Fiisoc(x)$ be the induced restriction functor. By the Dieudonné--Manin classification, the category $\Fiisoc(x)$ is equivalent to the category of $\Q$-graded $\Qpu$-vector spaces, where the $\Q$-graduation is induced by the slopes. Therefore, if $d$ is the lcm of the denominators of the generic slopes of $(\calM,\Phi_\calM)$, the functor $i^*$ induces a morphism $\chi:\Gm^{1/d}\to G(\calM,\Phi^\infty_\calM,x)$, where $\Gm^{1/d}$ is the dimension 1 torus with characters $\tfrac{1}{d}\Z$. Let $V_\calM$ be the Dieudonné--Manin structure of $\calM$ at $x$ associated to $\Phi_\calM$ (cf. \cite[Def. 3.1.6]{D'Ad20b}). By (the proof of) [\textit{ibid.}, Prop. 3.2.8],  the associated monodromy group $G(\calM, V_\calM,x)$ is a normal subgroup of $G(\calM,\Phi^\infty_\calM,x)$. Therefore, the morphism $\chi$ induces an action of $\Gm^{1/d}$ on $G(\calM,V_\calM,x)$ by conjugation. 

\spa

Thanks to [\textit{ibid.}, Prop. 3.3.2], the algebraic group $G(\calM,V_\calM,x)$ is a $\Qpu$-form of $G(\calM,x)$, thus it is commutative by our assumption. In addition, thanks to Lemma \ref{irr:l}, we know that $(\calM,V_\calM)$ is irreducible, which implies that $G(\calM,V_\calM,x)$ is reductive. Since $G(\calM,V_\calM,x)$ is a commutative reductive group, its group scheme of automorphisms is a discrete group. This implies that the action of $\Gm^{1/d}$ on $G(\calM,V_\calM,x)$ must be trivial. If $(\calM,\Phi_\calM)$ had at least two generic slopes, then $V_{\calM}$ would decompose as $V_\calM^{[a]}\oplus W$, where $V_\calM^{[a]}$ is the subspace of $V_\calM$ of slope $a\in \Q$ and $W$ is its Frobenius-stable direct summand. Since $\chi(\Gm^{1/d})$ commutes with $G(\calM,V_\calM,x)$, this decomposition would be stable under the action of $G(\calM,V_\calM,x)$, and thus it would induce a decomposition of $\calM$ in two pieces. This would contradict the fact that $\calM$ is irreducible.
\end{proof}

We end this section with a consequence of Proposition \ref{comm-then-cons:p} that we will need later on.

\begin{coro}\label{diag-isom:c}
	If $k$ is algebraically closed, there is a natural isomorphism
	$$\pi_1(\Isoc_{\Ki}(X,\eta)_F,\eta)^{\mathrm{diag}}\iso\pi_1(\LS(X,\Ki),\eta)^{\mathrm{diag}},$$ where $\LS(X,\Qpu)$ is the category of lisse $\Qpu$-sheaves over $X$.
\end{coro}
\begin{proof}In \cite[Prop. 3.3.4]{D'Ad20b} we constructed a natural fully faithful functor $\Psi:\LS(X,\Ki)\hookrightarrow \Isoc_{\Ki}(X,\eta)_F$. By [\textit{loc. cit.}], the essential image is spanned by those isocrystals with $\Qpu$-structure that can be endowed with an isoclinic Frobenius structure. Therefore, to prove the corollary it is enough to show that every object $(\calM,V_\calM)$ in $\Isoc_{\Ki}(X,\eta)_F$ with diagonalisable monodromy group is in the essential image of $\Psi$.
	
	\spa

	Without loss of generality, we may assume that $(\calM,V_\calM)$ comes from an $F^n$-isocrystal $(\calM,\Phi_\calM)$. In addition, since $(\calM,V_\calM)$ is semi-simple, it is enough to prove that the irreducible subobjects of $(\calM,\Phi_\calM)$ are isoclinic.
	 This simply follows from Proposition \ref{comm-then-cons:p}.\end{proof}
\section{Isocrystals over abelian varieties}
\label{isoc-over-abel:s}
Let $A$ be an abelian variety over $k$ with identity point $0_A$. We want to prove that the $F^n$-isocrystals over $A$ have constant slopes. For this scope, we first prove that the Tannaka group of the category of convergent isocrystals over $A$ is commutative.

\begin{prop}\label{comm-fund-group:p}
	The affine group scheme $\pi_1(\Isoc(A),0_A)$ is commutative.
\end{prop}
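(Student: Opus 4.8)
The goal is to show that $\pi_1(\Isoc(A),0_A)$ is commutative. The plan is to exploit the group structure of the abelian variety $A$ together with the Künneth formula (Proposition \ref{kunneth-formula:p}) via an Eckmann--Hilton argument, exactly as advertised in the introduction. The multiplication map $m:A\times A\to A$ is a morphism of varieties, and both $m$ and the diagonal-flanking inclusions $a\mapsto(a,0_A)$ and $a\mapsto(0_A,a)$ restrict to the identity of $A$; this is the structural input that will force commutativity.

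First I would record the monoid structure on the fundamental group. Applying the functor $\pi_1(\Isoc(-),-)$ and using that $A\times A$ carries the rational point $(0_A,0_A)$, the Künneth isomorphism of Proposition \ref{kunneth-formula:p} identifies
\begin{equation*}
\pi_1(\Isoc(A\times A),(0_A,0_A))\iso \pi_1(\Isoc(A),0_A)\times \pi_1(\Isoc(A),0_A).
\end{equation*}
Pulling back along $m^*$ then yields a homomorphism
\begin{equation*}
\mu:=m_*:\pi_1(\Isoc(A),0_A)\times \pi_1(\Isoc(A),0_A)\to \pi_1(\Isoc(A),0_A)
\end{equation*}
of affine group schemes over $K$, where I have used functoriality of $\pi_1$ together with the Künneth identification to rewrite the source. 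Writing $G:=\pi_1(\Isoc(A),0_A)$, the two unital inclusions $i_1,i_2:A\to A\times A$ (inserting $0_A$ in the second, resp. first, factor) compose with $m$ to give $\id_A$, so $\mu(g,e)=g$ and $\mu(e,g)=g$, i.e. $\mu$ has the identity section of $G$ as a two-sided unit.

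The Eckmann--Hilton step is now purely formal but is the conceptual heart of the argument. I have two binary operations on the points of $G$ (in the functor-of-points sense, over every $K$-algebra $R$): the ordinary group multiplication $\cdot$ of $G$, and the operation $\mu$, which is a \emph{group homomorphism} $G\times G\to G$ and hence satisfies the interchange law $\mu(g_1\cdot g_2,\ h_1\cdot h_2)=\mu(g_1,h_1)\cdot\mu(g_2,h_2)$ with respect to $\cdot$. Both operations share the same unit $e$. The standard Eckmann--Hilton lemma then forces the two operations to coincide and to be commutative: for instance $g\cdot h=\mu(g,e)\cdot\mu(e,h)=\mu(g\cdot e,\ e\cdot h)=\mu(g,h)$ and symmetrically $h\cdot g=\mu(e,h)\cdot\mu(g,e)=\mu(g,h)$, so $g\cdot h=h\cdot g$. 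Since this holds functorially in $R$, the group scheme $G$ is commutative.

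The step I expect to require the most care is setting up $\mu$ correctly as a homomorphism of group schemes and verifying the unit conditions at the level of Tannaka groups rather than at the level of abstract points — one must make sure that the Künneth isomorphism is compatible with the comultiplication-type structure induced by $m$, and that the retraction/section relations coming from $i_1,i_2$ really translate into $\mu(g,e)=g$ and $\mu(e,g)=g$. Once $\mu$ is known to be a genuine homomorphism $G\times G\to G$ with two-sided unit $e$, the interchange law is automatic (it is just the statement that $\mu$ respects the group law on the product), and the remainder is the formal Eckmann--Hilton computation above.
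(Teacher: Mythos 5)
Your proposal is correct and takes essentially the same route as the paper: use the K\"unneth isomorphism of Proposition \ref{kunneth-formula:p} and functoriality along the multiplication map $m:A\times A\to A$ to produce a homomorphism $\mu:\pi_1(\Isoc(A),0_A)\times\pi_1(\Isoc(A),0_A)\to\pi_1(\Isoc(A),0_A)$ with the identity as two-sided unit, then conclude by Eckmann--Hilton. The paper states this more tersely --- saying that $\widetilde{m}_*$ makes $\pi_1(\Isoc(A),0_A)$ a group object in the category of affine group schemes and citing \cite{EH} --- and your explicit verification of the unit conditions (via $m\circ i_1=m\circ i_2=\id_A$) and of the interchange law (from $\mu$ being a homomorphism) is precisely the content of that claim.
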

\begin{proof}
We want to prove that $\pi_1(\Isoc(A),0_A)$ is commutative using an Eckmann--Hilton argument (see \cite[Thm~5.4.2]{EH}). By Proposition~\ref{kunneth-formula:p}, the two projections of $A\times A$ to its factors induce an isomorphism $$\pi_1(\Isoc(A\times A),0_A\times 0_A)\iso \pi_1(\Isoc(A),0_A)\times \pi_1(\Isoc(A),0_A).$$ If $m:A\times A\to A$ is the multiplication map of $A$, the morphism 
\begin{align*}
	\widetilde{m}_*:\pi_1(\Isoc(A),0_A)\times \pi_1(\Isoc(A),0_A)& \iso \pi_1(\Isoc(A\times A),0_A\times 0_A)\xrightarrow{m_*}\pi_1(\Isoc(A),0_A)
\end{align*}
endows $\pi_1(\Isoc(A),0_A)$ with the structure of a group object in the category of affine group schemes. This implies that $\pi_1(\Isoc(A),0_A)$ is commutative, as we wanted.
\end{proof}

\begin{theo}\label{main:t}
If $A$ is an abelian variety over a perfect field $k$ of positive characteristic, every $F^n$-isocrystal over $A$ has constant slopes.
\end{theo}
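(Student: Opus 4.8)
The plan is to obtain Theorem~\ref{main:t} as an immediate consequence of the two propositions just established. Let $(\calM,\Phi_\calM)$ be an $F^n$-isocrystal over $A$, and forget the Frobenius structure so as to regard $\calM$ as an object of $\Isoc(A)$. Recall that the monodromy group $G(\calM,0_A)$ appearing in Proposition~\ref{comm-then-cons:p} is the Tannaka group of the full Tannakian subcategory $\langle\calM\rangle$ of $\Isoc(A)$ generated by $\calM$, computed with the fibre functor at the identity section $0_A$.

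First I would record the standard Tannakian fact that the inclusion $\langle\calM\rangle\hookrightarrow\Isoc(A)$ induces a faithfully flat homomorphism
\[
\pi_1(\Isoc(A),0_A)\twoheadrightarrow G(\calM,0_A),
\]
exhibiting $G(\calM,0_A)$ as a quotient of the full fundamental group. By Proposition~\ref{comm-fund-group:p} the source is commutative; since commutativity passes to quotients, $G(\calM,0_A)$ is commutative as well.

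It then remains to apply Proposition~\ref{comm-then-cons:p} with $X=A$ and $\eta=0_A$. The hypotheses are satisfied: $A$ is geometrically connected, being a connected variety carrying a rational point over the perfect field $k$, and $0_A$ is a perfect point since it is $k$-rational with $k$ perfect. That proposition then yields at once that the slopes of $(\calM,\Phi_\calM)$ are constant, which is the assertion.

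I expect no genuine obstacle at this stage, since all the substance has already been front-loaded into Proposition~\ref{comm-fund-group:p} (commutativity via the Eckmann--Hilton argument and the K\"unneth formula) and Proposition~\ref{comm-then-cons:p} (passage from commutative monodromy to constant slopes through the reduction to the rank~$1$ case). The only points requiring a word of justification are the quotient relation between the monodromy group $G(\calM,0_A)$ and the full fundamental group $\pi_1(\Isoc(A),0_A)$, and the compatibility of the base-point choices between the two propositions; both are routine.
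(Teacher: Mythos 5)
Your proposal is correct and follows exactly the paper's own proof: the monodromy group $G(\calM,0_A)$ is a quotient of $\pi_1(\Isoc(A),0_A)$, hence commutative by Proposition~\ref{comm-fund-group:p}, and Proposition~\ref{comm-then-cons:p} then gives constancy of the slopes. The extra details you flag (the quotient relation and the base-point compatibility) are indeed routine and are left implicit in the paper.
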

\begin{proof}
Let $(\calM,\Phi_\calM)$ be an $F^n$-isocrystal over $A$. By Proposition \ref{comm-fund-group:p}, the monodromy group $G(\calM,0_A)$, being a quotient of $\pi_1(\Isoc(A),0_A)$, is commutative. Thanks to Proposition \ref{comm-then-cons:p}, we deduce that the slopes of $(\calM,\Phi_\calM)$ are constant. This ends the proof.
\end{proof}

\begin{theo}

	\label{fini:t}
	If $k$ is a finite field, every $\iota$-pure $F^n$-isocrystal over $A$ becomes constant after passing to a finite étale cover.

\end{theo}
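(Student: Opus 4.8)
The plan is to combine Theorem \ref{main:t}, which already guarantees that every $F^n$-isocrystal over $A$ has constant slopes, with the theory of weights for overconvergent $F$-isocrystals over finite fields and the global monodromy theorem. Since $k$ is finite, a $\iota$-pure $F^n$-isocrystal is in particular overconvergent (one may appeal to the fact that over a smooth variety every $\iota$-pure isocrystal with Frobenius structure extends to an overconvergent one, or simply work within the category of overconvergent objects from the start), so that the machinery of \cite{Ked06} applies. First I would reduce to the case of an irreducible $\iota$-pure overconvergent $F^n$-isocrystal $(\calM, \Phi_\calM)$, since the property of becoming constant after a finite étale cover is preserved under extensions and the étale covers can be refined to a common one.

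Next I would invoke the global monodromy theorem, proved in \cite[Thm. 4.9]{Cre92} and \cite[Thm. 3.4.4]{D'Ad20a}, which relates the geometric monodromy group of $(\calM, \Phi_\calM)$ to its arithmetic monodromy group and controls the former via purity. The crucial input is Proposition \ref{comm-fund-group:p}: the Tannaka group $\pi_1(\Isoc(A), 0_A)$ is commutative, hence the monodromy group $G(\calM, 0_A)$ of $\calM$ is commutative as well. Combining commutativity with $\iota$-purity, the global monodromy theorem forces the geometric monodromy group to be finite: a connected commutative monodromy group attached to a pure object over an abelian variety must be trivial, because the associated rank $1$ slope-constant pieces produced by Lemma \ref{mt-group-rep:l} correspond to characters that are torsion in the relevant sense once purity is imposed. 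Thus after passing to the finite étale cover trivialising the component group of the monodromy, $(\calM, \Phi_\calM)$ becomes geometrically constant.

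Finally I would upgrade geometric constancy to constancy as an $F$-isocrystal. Here the constancy of slopes from Theorem \ref{main:t} ensures that the Newton polygon is globally constant, so that after the étale cover the isocrystal descends to an object pulled back from $\Spec(k)$; purity then guarantees that the Frobenius structure is likewise the pullback of one on a point, giving constancy in the strong sense. The main obstacle I anticipate is the second step: extracting finiteness of the geometric monodromy group from the conjunction of commutativity and $\iota$-purity. One must rule out a positive-dimensional torus or a nontrivial unipotent part in the geometric monodromy, and this is precisely where the failure of the global monodromy theorem over $\bar{\F}_p$ (as noted for ordinary elliptic curves in the introduction) signals that finiteness of $k$ is essential; the argument must use the weight filtration of \cite{Ked06} to pin down the Frobenius eigenvalues and deduce that the rank $1$ constituents have finite order monodromy.
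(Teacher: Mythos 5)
Your proposal assembles the right ingredients (Kedlaya's weight theory, the global monodromy theorem, and the commutativity from Proposition \ref{comm-fund-group:p}), but it has a genuine gap exactly at the step you yourself flag as the ``main obstacle'': you never actually derive finiteness of the monodromy group from purity and commutativity. Your proposed mechanism --- that the rank $1$ constituents produced by Lemma \ref{mt-group-rep:l} ``correspond to characters that are torsion in the relevant sense once purity is imposed'' --- is left as an unproven heuristic, and making it rigorous would require a separate argument (identifying pure unit-root rank $1$ objects with $p$-adic characters, controlling their Frobenius eigenvalues at all places, and invoking something like Katz--Lang finiteness). The paper avoids this entirely by using the two results in a different logical order: first, $\iota$-purity implies that the underlying isocrystal $\calM$ is \emph{semi-simple} (\cite[Cor.~3.5.2]{D'Ad20a}, resting on the weights of \cite{Ked06}); second, the global monodromy theorem in the form \cite[Cor.~3.4.5]{D'Ad20a} says that for a semi-simple object the neutral component $G(\calM,\eta)^\circ$ is a \emph{semi-simple algebraic group}. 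A commutative connected semi-simple group is trivial, so Proposition \ref{comm-fund-group:p} immediately gives that $G(\calM,\eta)$ is finite, and \cite[Prop.~3.3.4]{D'Ad20a} then trivialises $\calM$ on a finite étale cover. Your proposal never uses semi-simplicity of the object itself, which is precisely the bridge between purity and the group-theoretic statement; without it, the global monodromy theorem gives you no handle on the connected part of the monodromy.

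Two smaller points. First, your appeal to Theorem \ref{main:t} and the final ``upgrade geometric constancy to constancy'' step via slope constancy is an unnecessary detour: once the underlying convergent isocrystal is trivial on the cover, the Frobenius structure is automatically given by a matrix over $K$ (global sections on a proper geometrically connected variety), so the $F$-isocrystal is constant with no further input. Second, your worry about overconvergence is resolved more simply than by an extension theorem: $A$ is proper, so convergent and overconvergent isocrystals coincide, and the weight machinery of \cite{Ked06} applies directly.
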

\begin{proof}Without loss of generality we may assume that $n=[k:\F_p]$.
By \cite[Cor. 3.5.2]{D'Ad20a}, if $(\calM,\Phi_\calM)$ is a $\iota$-pure $F^n$-isocrystal over $A$, then $\calM$ is semi-simple\footnote{In the notation of \cite{D'Ad20a}, the $F^n$-isocrystal $(\calM,\Phi_\calM)$ is a \textit{$K$-coefficient object} and $\calM$ is the \textit{geometric $K$-coefficient object} associated to $\calM$.}. As a consequence, thanks to [\textit{ibid.}, Cor. 3.4.5], the neutral component $G(\calM,\eta)^\circ$ is a semi-simple algebraic group. Combining this with Proposition \ref{comm-fund-group:p}, we deduce that $G(\calM,\eta)^\circ$ is trivial. Therefore, by [\textit{ibid.}, Prop. 3.3.4], after passing to a finite étale cover of $A$, the isocrystal $\calM$ becomes trivial. This yields the desired result.
\end{proof}

To end the article, we want to prove an additional consequence of Proposition \ref{comm-fund-group:p}, which is an analogue of \cite{Lan12} and \cite{BdS17}.

\begin{theo}\label{alb:t}
	Let $X$ be a smooth connected proper variety over an algebraically closed field $k$ of positive characteristic and let $x$ be a $k$-point of $X$. If $f:X \to \mathrm{Alb}_X$ is the Albanese morphism mapping $x$ to $0_{\mathrm{Alb}_X}$, the induced morphism $$\pi_1(\Isoc(X)_F,x)^{\mathrm{ab}}\xrightarrow{f_*} \pi_1(\Isoc(\mathrm{Alb}_X)_F,0_{\mathrm{Alb}_X})$$ is faithfully flat. Moreover, the kernel of $f_*$ is a finite constant group scheme over $K$ isomorphic to $C:=(\mathrm{Pic}^{\tau}_{X/k}/\mathrm{Pic}^{0,\mathrm{red}}_{X/k})^\vee(k).$
\end{theo}

\begin{proof}
	Write $G$ for $\pi_1(\Isoc(X)_F,x)$ and $H$ for $\pi_1(\Isoc(\mathrm{Alb}_X)_F,0_{\mathrm{Alb}_X})$. By Proposition \ref{comm-fund-group:p}, the affine group scheme $H$ is commutative, therefore both $G^{\mathrm{ab}}$ and $H$ decompose as a product of a pro-diagonalisable affine group and a commutative pro-unipotent affine group. By \cite[Lem. 5]{KL81}, the morphism $\pi_1^\et(X,x)^{\mathrm{ab}}\to\pi_1^\et(\mathrm{Alb}_X,{0}_{\mathrm{Alb}_X})$ is surjective and the kernel is isomorphic to $C$. This implies that $\pi_1(\LS(X,\Ki),x)^{\mathrm{ab}}\to\pi_1(\LS(\mathrm{Alb}_X,\Ki),0_{\mathrm{Alb}_X})$
	is faithfully flat with kernel $C$. By Corollary \ref{diag-isom:c} we deduce then that $$\pi_1(\Isoc_{\Ki}(X,x)_F,x)^{\mathrm{diag}}\to\pi_1(\Isoc_{\Ki}(\mathrm{Alb}_X,0_{\mathrm{Alb}_X})_F,0_{\mathrm{Alb}_X})^{\mathrm{diag}}$$
	 is faithfully flat with kernel $C$. Finally, by virtue of \cite[Prop. 3.3.2]{D'Ad20b}, we get that $G^{\mathrm{diag}}\to H^{\mathrm{diag}}$ is faithfully flat with kernel $C$. 
	
	\spa
	
	 It remains to prove that the morphism $G^{\mathrm{ab, uni}}\to H^{\mathrm{uni}}$ is an isomorphism. By \cite[Thm. 5.4]{DE20} and its proof, the category $\Isoc(X)_F$ (resp. $\Isoc(\mathrm{Alb}_X)_F$) is a Serre subcategory of $\Isoc(X)$ (resp. $\Isoc(\mathrm{Alb}_X)$). This implies that every convergent isocrystal over $X$ with unipotent monodromy is contained in $\Isoc(X)_F$. Therefore, the affine group $G^{\mathrm{uni}}$ (resp. $H^{\mathrm{uni}}$) is the Tannaka group of the category of unipotent convergent isocrystals over $X$ (resp. $\mathrm{Alb}_X$). In other words, the affine group scheme $G^{\mathrm{uni}}$ (resp. $H^{\mathrm{uni}}$) coincides with the fundamental group of $X$ (resp. $\mathrm{Alb}_X$) considered in \cite[§2.2.1]{CLS99}. As explained in the proof of [\textit{ibid.}, Prop. 3.2.1], the Lie algebra of $G^\mathrm{ab, uni}$ (resp. $H^{\mathrm{uni}}$) is dual to $H^1_{\mathrm{rig}}(X)$ (resp. $H^1_{\mathrm{rig}}(\mathrm{Alb}_X))$. Thanks to \cite[Rmq. II.3.11.2]{Ill79}, we deduce that $G^{\mathrm{ab, uni}}\to H^{\mathrm{uni}}$ is an isomorphism, as we wanted.
\end{proof}
\bibliographystyle{ams-alpha}

\end{document}